\documentclass{amsart}
\usepackage[T1]{fontenc}
\usepackage[margin=1.5 in]{geometry}
\usepackage{dirtytalk}
\usepackage[dvipsnames]{xcolor}
\usepackage{float}

\usepackage{mathtools}
\usepackage{blkarray, bigstrut}
\usepackage{nicematrix}

\usepackage{amsmath}
\usepackage{amssymb}
\usepackage{amsthm}
\usepackage{tikz-cd}
\usepackage{tikz}
\usetikzlibrary{knots, snakes}
\usetikzlibrary{arrows.meta}
\usepgflibrary[arrows.meta]
\usetikzlibrary{angles, quotes, decorations.markings, decorations.pathmorphing, decorations.pathreplacing, decorations.shapes}
\usepackage{graphicx}
\usepackage{mathtools}
\usepackage{extarrows}
\usepackage{booktabs}
\usepackage{pythonhighlight}
\usepackage{hyperref}
\usepackage[all]{xy}
\usepackage{stmaryrd}
\usepackage{url}
\usepackage{accents}
\usepackage[
backend=bibtex,
style=numeric,
sorting=nyt, 
maxbibnames=99]{biblatex}

\addbibresource{bib1.bib}
\addbibresource{bib2.bib}
\addbibresource{bib3.bib}
\addbibresource{bib4.bib}

\usepackage{amsmath,amsthm,amssymb,latexsym,amscd}
\usepackage{mathrsfs}

\usepackage{hyperref}
\hypersetup{pdftitle={Buchanan A condition on the Khovanov homology of three families of positive links},pdfauthor={Lizzie Buchanan}}
\hypersetup{colorlinks=true,linkcolor=blue,anchorcolor=blue,citecolor=blue}
\pdfstringdefDisableCommands{%
  \let\noindent\empty 
}

\numberwithin{equation}{section}
\theoremstyle{plain}
 
\newtheorem{lemma}[equation]{Lemma} 
\newtheorem*{lemma*}{Lemma}

\newtheorem{proposition}[equation]{Proposition}
\newtheorem*{proposition*}{Proposition}

\newtheorem{theorem}[equation]{Theorem}
\newtheorem*{theorem*}{Theorem}

\newtheorem{corollary}[equation]{Corollary}
\newtheorem*{corollary*}{Corollary}

\newtheorem*{conjecture*}{Conjecture}
\newtheorem{question}[equation]{Question}

\def\ol{\overline}

\definecolor{purp}{RGB}{148,30,238}
\definecolor{pinky}{RGB}{255, 174, 252}

\definecolor{purp1}{RGB}{120,30,238}
\definecolor{pinky1}{RGB}{243, 162, 252}
\definecolor{pinky2}{RGB}{252, 165, 249}
\definecolor{pinky3}{RGB}{247, 174, 255}

\definecolor{purp2}{RGB}{189, 144, 249}
\definecolor{purp3}{RGB}{203, 172, 243}


\theoremstyle{remark}

\theoremstyle{definition}

\newtheorem{definition}[equation]{Definition}

\setcounter{tocdepth}{1}

\DeclareMathOperator{\coeff}{coeff}
\DeclareMathOperator{\lead}{lead}

\DeclareMathOperator{\rank}{rank}

\def\Z{\mathbb Z}


\title{A condition on the Khovanov homology of three families of positive links}
\author{Lizzie Buchanan}

\begin{document}

\maketitle

\begin{abstract}
    In previous work, we developed diagram-independent upper bounds on the maximum degree of the Jones polynomial of three families of positive links. These families are characterized by the second coefficient of the Jones polynomial. In this paper, we extend those results and construct diagram-independent upper bounds on the maximum non-vanishing quantum degree of the Khovanov homology of three families of positive links. This can be used as a positivity obstruction.
\end{abstract}

\section{Introduction}

Khovanov homology is a link invariant introduced in \cite{KhovanovCat} by Khovanov that categorifies the Jones polynomial. While the question of whether or not the Jones polynomial detects the unknot famously remains unanswered, Kronheimer and Mrowka showed in \cite{KronheimerMrowka11} that Khovanov homology does detect the unknot. We direct the reader to Bar-Natan's \cite{BarNatan02}, Khovanov's \cite{KhovanovCat} and \cite{Khovanov2003}, and Viro's \cite{OlegViro2004} for full description of its construction.
For this paper, the relevant information is that the bigraded Euler characteristic of Khovanov homology is, up to normalization and change of variables, the Jones polynomial $V_L(t)$ defined by Jones in \cite{Jones:1985dw}. 
As such, is natural to try to lift statements about the Jones polynomial to statements about Khovanov homology. 

In \cite{Buchanan_2022}, \cite{Buchanan2023}, and \cite{buchanan2025conditionjonespolynomialfamily}, we proved that positive links with second Jones coefficient equal to $0, \pm 1,$ or $\pm 2$ satisfy inequalities relating the maximum and minimum degrees of the Jones polynomial and the leading coefficient of the Conway polynomial. 
(With this terminology the Jones polynomial of the positive trefoil knot, $t + t^3 - t^4$, has second Jones coefficient equal to $0$.)

\begin{theorem}[\cite{Buchanan_2022, Buchanan2023, buchanan2025conditionjonespolynomialfamily}] \label{results for Jones}
    Let $L$ be a positive link with Jones polynomial $V_L$. Let $p_1(L)$ be the absolute value of the second coefficient of $V_L$. If $p_1(L) = 0,$ $1$, or $2$, then

    $$ \max \deg V_L \leq \begin{cases}
        4\min\deg V_L + \frac{n-1}{2} & \text{ if } p_1(L) = 0,\\
        4\min\deg V_L + \frac{n-1}{2} + 2\lead\coeff\nabla_L - 2 & \text{ if } p_1(L) = 1,\\
        4\min \deg V_L + \frac{n-1}{2} + \lead\coeff\nabla_L & \text{ if } p_1(L) = 2,
    \end{cases}
    $$
\noindent
    where $n$ is the number of link components and $\nabla_L$ is the Conway polynomial of $L$. 

\end{theorem}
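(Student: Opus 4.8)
The plan is to work with a reduced, connected positive diagram $D$ of $L$, with $c$ crossings, $s$ Seifert circles, and $n$ components, and to read the low-order behaviour of $V_L$ off the Kauffman bracket state sum of $D$. Two features of positive diagrams drive everything. First, the oriented (Seifert) resolution is one of the two extreme Kauffman states of $D$, and this is the ``easy'', essentially cancellation-free end of $\langle D\rangle$: for a positive link $\min\deg V_L$ is known, and it equals $g(L)+\tfrac{n-1}{2}$, equivalently $2\min\deg V_L = c-s+1 = b_1(G)$ for the Seifert graph $G=G(D)$. So $\min\deg V_L$ is a diagram-independent quantity, and the content of the theorem lies entirely at the opposite end, $\max\deg V_L$, where cancellation genuinely occurs --- so the whole task is to show that this cancellation is severe.

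The heart of the argument is to convert the hypothesis $p_1(L)\le 2$ into structural information about $D$ and $G$. I would first set up a dictionary in which the coefficient of $V_L$ just above its lowest term is computed from the Kauffman states of $D$ that differ from the Seifert state at a single crossing, with a bounded correction coming from pairs of crossings; carrying this out should identify $p_1(L)$ with a combinatorial quantity of $G$ that vanishes exactly when $G$ is a \emph{tree of multi-edges} --- each $2$-connected block of $G$ a bundle of parallel edges. This would give: if $p_1(L)=0$, the Seifert surface of $D$ is a plumbing of fibre surfaces of $(2,m)$-torus links, so $L$ is fibred and $\lead\coeff\nabla_L=1$; and if $p_1(L)\in\{1,2\}$, then $G$ departs from a tree of multi-edges only in a controlled way, and that departure is measured precisely by $\lead\coeff\nabla_L$ --- the leading coefficient of the Conway polynomial, which for positive links has a combinatorial description in terms of $G$ and equals $1$ exactly on the fibred (that is, $p_1=0$) family.

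Given this structure, I would bound $\max\deg V_L$ by estimating the extreme term at the all-$B$ end of $\langle D\rangle$. The crude bound $\max\deg V_L \le \min\deg V_L + c$ is useless on its own, since neither $c$ nor $s$ is an invariant; but the structural description says that, outside a bounded exceptional part, every crossing of $D$ lies in a multi-edge of $G$, and a multi-edge of multiplicity $m$ behaves like a $T(2,m)$ summand --- for which $\max\deg V = 3\min\deg V + 1$ --- hence contributes to $\max\deg V_L$ at most four times what it contributes to $\min\deg V_L$, up to a $\tfrac12$ for each two-component (Hopf-band) piece. Summing these local contributions along the block tree, directly in the state sum of $D$: the multi-edge part accounts for the $4\min\deg V_L$ term, the components account for the $\tfrac{n-1}{2}$ term (an iterated connected sum of $n-1$ positive Hopf links is extremal), and the exceptional part supplies the corrections $2\lead\coeff\nabla_L-2$ when $p_1=1$ and $\lead\coeff\nabla_L$ when $p_1=2$; for $p_1=0$ there is no exceptional part, hence no correction.

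I expect the real obstacle to be the second step: turning the purely numerical condition that the second coefficient of $V_L$ has absolute value at most $2$ into the clean combinatorial statement about $G$, and simultaneously matching the excess to $\lead\coeff\nabla_L$. This forces one to control the cancellation in the Kauffman bracket \emph{exactly}, not merely to estimate it, in the first one or two homological steps away from the Seifert state, and it is natural that this has to be carried out separately, and with increasing care, for $p_1=0$, then $1$, then $2$ --- which is presumably why the three cases occupied three successive papers. A secondary technical point is reducing to a diagram $D$ in which $c$ and $s$ are themselves controlled (via flypes, destabilisations, and removal of reducible bigons), so that the final bound comes out diagram-independent as stated.
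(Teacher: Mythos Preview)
This theorem is not proved in the present paper --- it is cited from the author's earlier work, and the paper only records the skeleton of the argument: for a reduced positive diagram $D$ one has $\min\deg V_L=\tfrac12\bigl(c(D)-|s_A(D)|+1\bigr)$ exactly and $\max\deg V_L\le\tfrac12\bigl(2c(D)+|s_B(D)|-1\bigr)$, so the entire content of the theorem is an upper bound on $|s_B(D)|$ in terms of $c(D)$, $|s_A(D)|$, $n$, and the correction $\gamma(L)$.

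Your proposal reconstructs this framework accurately and fills in the mechanism the paper alludes to but deliberately omits. You correctly place the all-$A$ (Seifert) state at the easy end, determining $\min\deg V_L$ exactly, and the all-$B$ state as the object to bound; your identification of $p_1(L)$ with the first Betti number of the reduced Seifert graph --- so that $p_1=0$ means $G$ is a tree of multi-edges, the Seifert surface is a plumbing of $(2,m)$-torus-link fibre surfaces, and $L$ is fibred with $\lead\coeff\nabla_L=1$ --- is exactly the ``graph-theoretic, diagrammatic significance'' the paper names and declines to spell out. Your account of how the multi-edge blocks produce the $4\min\deg V_L$ term and the exceptional part produces $\gamma(L)$ is a reasonable reading of how the bound on $|s_B(D)|$ is assembled. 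In short, your outline matches the paper's summary and, as far as can be inferred from it, the approach of the cited papers; there is nothing further in this paper to compare against.
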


Theorem \ref{results for Jones} can serve as a positivity obstruction. In \cite{Buchanan2023} we constructed infinite families of almost-positive knot diagrams with second Jones coefficient equal to $0, \pm 1, $ or $\pm 2$. We showed that these knots are not positive precisely because they fail the test of Theorem \ref{results for Jones}.

In this paper we generalize those results about the Jones polynomial to statements about Khovanov homology, and provide an example of a knot which demonstrates that, at least for links with second Jones coefficient equal to $0$, our new Theorem \ref{generalizes to khovanov, cases by 2nd coeff} is a stronger positivity obstruction than Theorem \ref{results for Jones}.\\

\noindent
\textbf{Theorem \ref{generalizes to khovanov, cases by 2nd coeff}}.
\textit{    Let $L$ be a positive link and let $p_1(L)$ be the absolute value of the second coefficient of its Jones polynomial. If $p_1(L) = 0,$ $1$, or $2$, then}

    $$ \ol{j}(L) \leq \begin{cases}
        4\underline{j}(L) + n + 4 & \text{ if } p_1(L) = 0,\\
        4\underline{j}(L) + n + 4\lead\coeff\nabla_L & \text{ if } p_1(L) = 1,\\
        4\underline{j}(L) + n + 4 + 2\lead\coeff\nabla_L & \text{ if } p_1(L) = 2,
    \end{cases}
    $$\\
\noindent
     \textit{where the second coefficient is the coefficient of the $t^{\min\deg V_L + 1}$ term, $n$ is the number of link components, $\ol{j}(L):= \max\{j | Kh^{*,j}(L)\neq 0\}$,  $\underline{j}(L):= \min\{j | Kh^{*,j}(L) \neq 0\}$, and $\nabla_L$ is the Conway polynomial of $L$. }\\

\vspace{-4pt}

The value $p_1(L)$, called the \textit{cyclomatic number} in \cite{RadSDS22} and \cite{kegel2023khovanovhomologypositivelinks} of a positive link $L$, has graph-theoretic, diagrammatic significance. This significance is not directly needed in this paper, so we omit its discussion for the sake of brevity. The interested reader can explore this topic further in the following:
Futer, Kalfagianni, and Purcell's \cite{futer2012guts}; Futer's \cite{Futer_2013}; Kegel, Manikandan, Mousseau, and Silvero's \cite{kegel2023khovanovhomologypositivelinks}; Przytycki and Silvero's \cite{PS2020};   Sazdanovi\'{c} and Scofield's \cite{RadSDS22}; Stoimenow's \cite{St05}; and our \cite{Buchanan_2022}, \cite{Buchanan2023}, and \cite{buchanan2025conditionjonespolynomialfamily}.

In \cite{RadSDS22}, Sazdanovi\'{c} and Scofield found that for any positive link $L$, $Kh^{1,2-\chi}(L) = \Z^{p_1(L)}$. And in \cite{kegel2023khovanovhomologypositivelinks}, Kegel, Manikandan, Mousseau, and Silvero found that all other homology groups in homological grading 1 vanish, so that for any positive link $L$ we have $Kh^1(L) = \Z^{p_1(L)}$. This gives a nice way to restate Theorem \ref{generalizes to khovanov, cases by 2nd coeff} solely in terms of Khovanov homology and the Conway polynomial, without explicit reference to the Jones polynomial at all.

\begin{theorem}\label{generalizes to khovanov, cases by hom 1}
    Let $L$ be a positive link whose Khovanov homology group in homological grading $1$ is $Kh^{1}(L) \simeq \Z^{p_1(L)}$ where $p_1(L) = 0, 1,$ or $2$. Then 

    $$ \ol{j}(L) \leq \begin{cases}
        4\underline{j}(L) + n + 4 & \text{ if } p(L) = 0,\\
        4\underline{j}(L) + n + 4\lead\coeff\nabla_L & \text{ if } p(L) = 1,\\
        4\underline{j}(L) + n + 4 + 2\lead\coeff\nabla_L & \text{ if } p(L) = 2,
    \end{cases}
    $$
\noindent
    where $\ol{j}(L):= \max\{j | Kh^{*,j}(L)\neq 0\}$,  $\underline{j}(L):= \min\{j | Kh^{*,j}(L) \neq 0\}$, $n$ is the number of link components, and $\nabla_L$ is the Conway polynomial of $L$. 

\end{theorem}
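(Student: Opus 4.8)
The plan is to derive Theorem~\ref{generalizes to khovanov, cases by hom 1} directly from Theorem~\ref{generalizes to khovanov, cases by 2nd coeff}. The three displayed inequalities in the two statements are identical, involving the same invariants $\ol{j}(L)$, $\underline{j}(L)$, $n$, and $\nabla_L$; so the whole of the reduction is the claim that the hypothesis ``$Kh^1(L)\simeq\Z^{p_1(L)}$ with $p_1(L)\in\{0,1,2\}$'' is equivalent to the hypothesis of Theorem~\ref{generalizes to khovanov, cases by 2nd coeff}, namely that the absolute value of the coefficient of the $t^{\min\deg V_L+1}$ term of $V_L$ lies in $\{0,1,2\}$, and that in that case the integer $p_1(L)$ is exactly this absolute value.

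I would establish this equivalence by quoting the two structural results recalled just before the statement. By Sazdanovi\'{c}--Scofield \cite{RadSDS22}, every positive link $L$ has $Kh^{1,2-\chi}(L)=\Z^{p_1(L)}$, where $p_1(L)$ is the absolute value of the second Jones coefficient; by Kegel--Manikandan--Mousseau--Silvero \cite{kegel2023khovanovhomologypositivelinks}, all remaining Khovanov homology groups in homological grading $1$ vanish. Together these give $\operatorname{rank}Kh^1(L)=p_1(L)$ for every positive link, with this common value equal by definition to $|\,\text{coefficient of } t^{\min\deg V_L+1} \text{ in } V_L\,|$. Hence a positive link satisfies the hypothesis of Theorem~\ref{generalizes to khovanov, cases by hom 1} precisely when it satisfies that of Theorem~\ref{generalizes to khovanov, cases by 2nd coeff} with the same value of $p_1(L)$, and invoking Theorem~\ref{generalizes to khovanov, cases by 2nd coeff} yields the asserted inequality in each of the three cases.

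If one instead wants a self-contained proof of $\operatorname{rank}Kh^1(L)=|\,\text{second Jones coefficient}\,|$ rather than a citation, I would read it off the graded Euler characteristic $\sum_{i,j}(-1)^i q^j\dim Kh^{i,j}(L)=(q+q^{-1})V_L(q^2)$: for a positive link the Khovanov homology is supported in homological gradings $\ge 0$, and a diagonal lower bound on its support (together with the placement $Kh^1(L)\subseteq Kh^{*,2-\chi}(L)$ of \cite{RadSDS22}) forces only homological gradings $0$ and $1$ to contribute to the two lowest nonzero quantum gradings, so matching $q$-coefficients there isolates $\operatorname{rank}Kh^1(L)$ as $\pm$(second Jones coefficient). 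The only place any care is needed --- and the ``main obstacle'', such as it is --- is ruling out cancellation between homological gradings near the bottom of the quantum grading, which is exactly what those support properties guarantee; since all of this is already in the literature, I would simply cite it.
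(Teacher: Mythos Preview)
Your proposal is correct and matches the paper's approach exactly: the paper treats Theorem~\ref{generalizes to khovanov, cases by hom 1} as a restatement of Theorem~\ref{generalizes to khovanov, cases by 2nd coeff}, justified by the cited results of Sazdanovi\'{c}--Scofield and Kegel--Manikandan--Mousseau--Silvero that $Kh^1(L)\cong\Z^{p_1(L)}$ for positive links. No separate formal proof is given in the paper beyond this identification, which is precisely what you do.
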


This paper is structured as follows: In Section \ref{section khovanov homology and jones} we remind the reader of the relationship between the Jones polynomial and Khovanov homology. In Section \ref{section extreme quantum grading} we investigate  several definitions around the quantum grading of the Khovanov homology and we prove Theorem \ref{generalizes to khovanov, cases by 2nd coeff}.
Finally, in Section \ref{section further exploration} we demonstrate that Theorem \ref{generalizes to khovanov, cases by 2nd coeff} is strictly stronger than Theorem \ref{results for Jones} as a positivity obstruction, and we present a few questions for future research.

\section*{Acknowledgements}
The author is partially supported by NSF Grant DMS-2038103 at the University of Iowa. She would like to thank Keiko Kawamuro for guidance on this project, and Marithania Silvero for comments on an early draft of this paper. 

\section{Khovanov homology and Jones polynomial}\label{section khovanov homology and jones}

Computing the Khovanov homology of a link $L$ involves setting up $\Z$-graded chain complexes associated to a diagram $D$ of $L$, and then computing the bigraded homology groups $Kh^{i,j}(D)$ of those chain complexes. The groups $Kh^{i,j}(D)$ are actually invariants of the link $L$, so can be denoted $Kh^{i,j}(L)$ \cite{KhovanovCat}. Such a group $Kh^{i,j}(L)$ is called a \textit{Khovanov homology group with homological grading $i$ and quantum grading $j$}. 

Khovanov homology categorifies the Jones polynomial in the sense that the graded Euler characteristic of the Khovanov homology is $J_L(q)$, a version of the Jones polynomial which can be normalized to obtain the original Jones polynomial $V_L(t)$,
given in \cite{Jones:1985dw} and \cite{Kauffman}, with which the Jones polynomial of the unknot is $1$. In some texts $J_L(q)$ is called \say{the Jones polynomial} and is used instead of $V_L(t)$. The relationship between Khovanov homology groups, $J_L(q)$ (which we call \say{an unnormalized Jones polynomial}), and $V_L(t)$ is shown in the following equation.

\begin{equation}\label{jones poly conversion eqn v4}
    \underset
        {\underset
            {\text{an unnormalized Jones polynomial}}
            {=\text{graded Euler characteristic, }}
        }
        {\underbrace{\sum_{i,j}(-1)^i \rank Kh^{i,j}(L)q^j}} = J_L(q) = (q + q^{-1}) V_L(t)\Big|_{t^{1/2} \hspace{1pt}=\hspace{1pt} -q} 
\end{equation}

As Jones notes in his paper, $V_L$ is a Laurent polynomial in $\Z[t, t^{-1}]$ for links with an odd number of components, and is $t^{1/2}$ times a Laurent polynomial in $\Z[t, t^{-1}]$ for links with an even number of components \cite{Jones:1985dw}.
Hence for knots in particular, the step of replacing $t^{1/2}$ in $V_L(t)$ with $-q$ is equivalent to simply replacing $t^{1/2}$ with $q$. For that reason, some texts which deal exclusively with knots omit that minus sign in the setup of equation \ref{jones poly conversion eqn v4}. 

The key data of the Khovanov homology is, of course, the homology groups, which we can organize in a chart as in the following examples. 

\begin{figure}[h]
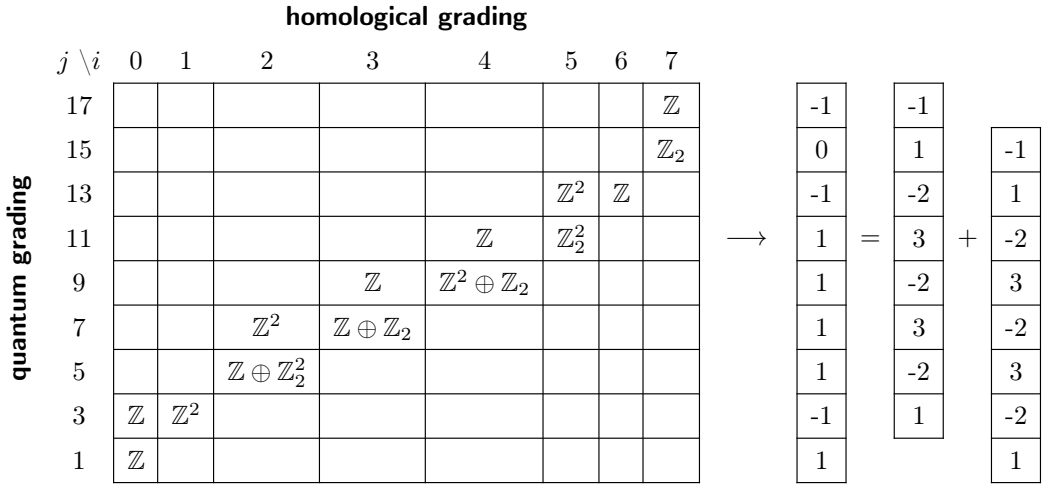

\centering
\renewcommand{\arraystretch}{1.4}
\begin{NiceTabular}{*{20}{c}}
&   & \Block{1-*}{\bfseries \sffamily homological grading} \\
& $j$ \textbackslash $i$  & 0 & 1 & 2 & 3 & 4 & 5 & 6 & 7  \\
\Block{*-1}{\rotatebox{90}{\bfseries \sffamily quantum grading}} 
& 17 &  \Block[hvlines]{*-*}{}    
            &     &       &     &          &   & &  $\Z$    \\ 
& 15 &      &     &       &        &       &    &  & $\Z_2$    \\ 
& 13 &      &     &       & & & $\Z^2$   & $\Z$  &          \\ 
& 11 &      &     &       & &  $\Z$ & $\Z_2^2$  &        \\ 
& 9 & &&& $\Z$ & $\Z^2 \oplus \Z_2$ \\
& 7 & && $\Z^2$ & $\Z \oplus \Z_2$ \\
& 5 & && $\Z \oplus \Z_2^2$ \\
& 3 & $\Z$ & $\Z^2$ \\
& 1 & $\Z$ &  \\
\end{NiceTabular}
\hspace{3pt} $\longrightarrow$\hspace{-2pt}
%
\begin{NiceTabular}{*{20}{c}}
&   \\
&   \\ 
&  \Block[hvlines]{*-*}{}    
         -1    \\ 
& 0  \\ 
& -1  \\ 
& 1  \\ 
& 1  \\
& 1  \\ 
& 1  \\
& -1  \\ 
& 1  \\
\end{NiceTabular}
\hspace{2pt}$=$\hspace{-8pt}
\begin{NiceTabular}{*{20}{c}}[corners=SE]
&   \\
&   \\ 
&  \Block[hvlines]{*-*}{}    
         -1         \\ 
& 1 \\ 
& -2 \\ 
& 3 \\ 
& -2  \\ 
& 3  \\
& -2  \\ 
& 1  \\
& \\
\end{NiceTabular}
\hspace{2pt}$+$\hspace{-8pt}
\begin{NiceTabular}{*{20}{c}}[corners=NW]
&   \\
&   \\ 
&  \Block[hvlines]{*-*}{}    
            \\ 
& -1   \\ 
& 1 \\ 
& -2 \\ 
& 3 \\ 
& -2  \\ 
& 3  \\
& -2  \\ 
& 1  \\
\end{NiceTabular}
\caption{Khovanov homology of $7_4$ (left), coefficient vector of unnormalized Jones polynomial $J_{7_4}(q)$ (middle), conversion to coefficient vector of Jones polynomial $V_{7_4}(t)$(right). Data from KnotAtlas \cite{KnotAtlas7-4}.}
\label{fig: KH 7_4}
\end{figure}

In Figure \ref{fig: KH 7_4} we see the Khovanov homology chart of positive knot $7_4$, which we can use to calculate the coefficient vector of the unnormalized Jones polynomial $J_L(q)$ and then the coefficient vector of the Jones polynomial $V_L(t)$. The alternating sum of the ranks of the homology groups in row $j$ gives the coefficient of the $q^{j}$ term of the unnormalized Jones polynomial $J_L(q)$. Divide $J_L(q)$ by $q+q^{-1}$ and replace $q$ with $-t^{1/2}$ and we get the normalized Jones polynomial $V_L(t)$, whose coefficient vector appears on the far right of Figure \ref{fig: KH 7_4}. 

In this example we have: 
$$\text{Unnormalized Jones polynomial: } J_{7_4}(q) = q - q^3 + q^5 + q^7 + q^9 +q^{11} - q^{13} - q^{17}$$
$$\text{Jones polynomial: } V_{7_4}(t) = t-2t^2 + 3t^3 - 2t^4 + 3t^5 - 2t^6 + t^7 -t^8$$

As we can see, the largest possible value of $j$ for which row $j$ of the Khovanov chart is non-empty will tell us the largest possible value of the maximum degree of the Jones polynomial. But the reverse is not true in general. For example, in Figure \ref{fig:KH 11n57} we see that knowing the maximum degree of $V_{11_n57}(t)$ alone does not allow us to accurately predict the largest possible value of $j$ for which row $j$ of the Khovanov chart is non-empty.

\begin{figure}[h]
    \centering
    \includegraphics[width=1\linewidth]{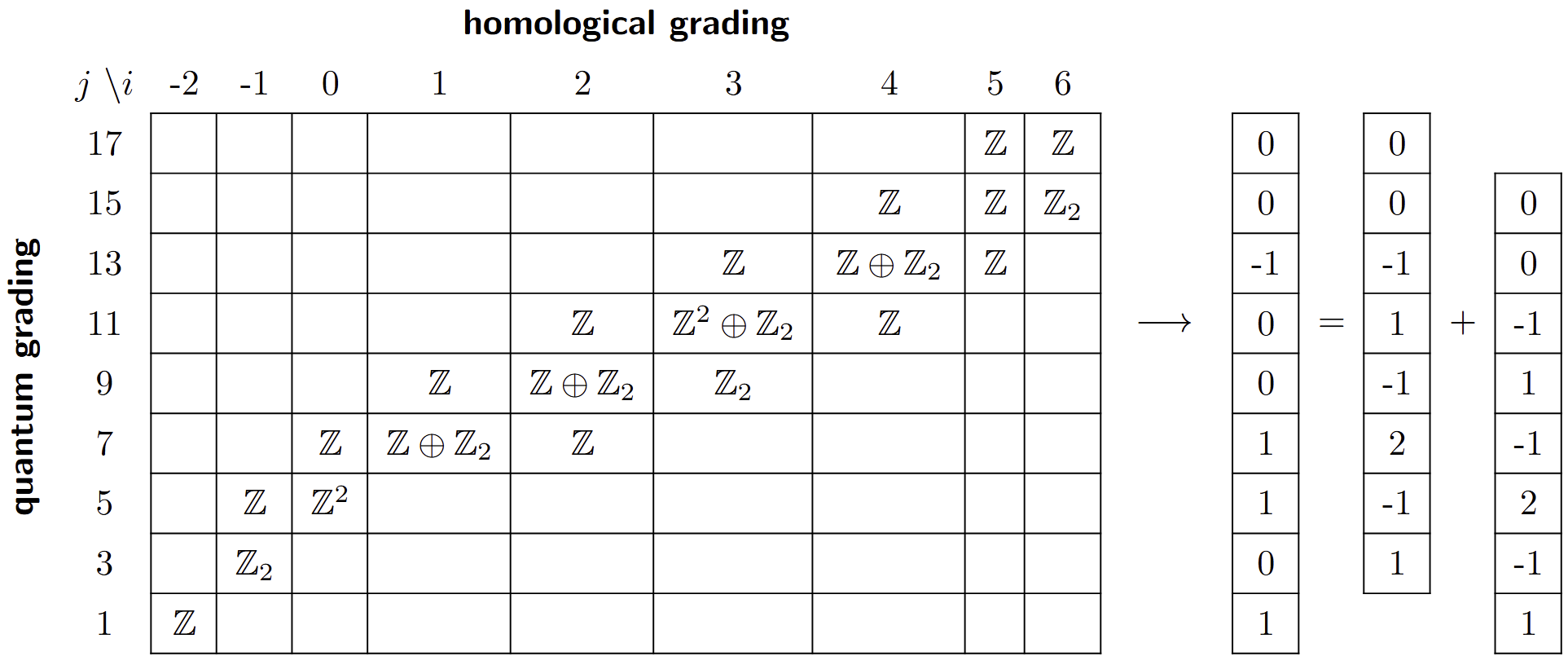}
    \caption{Khovanov homology of $11_n57$ (left), conversion to coefficient vector of its Jones polynomial (right). Data from KnotAtlas \cite{KnotAtlas11n57}.}
    \label{fig:KH 11n57}
\end{figure}

\vspace{5pt}

\section{Extreme Quantum Grading}\label{section extreme quantum grading}

The following definitions about extreme quantum gradings come largely from \cite{GMMS2018}.

\begin{definition}
    We call $$\underline{j}(L) := \min \{j \, | \, Kh^{*,j}(L) \neq 0\}$$ the \textbf{lower extreme quantum grading} of the Khovanov homology of link $L$. We can similarly define $$\ol{j}(L) := \max \{j \, | \, Kh^{*,j}(L) \neq 0\}$$ to be the \textbf{upper extreme quantum grading}. 
\end{definition}

In brief, Khovanov homology and the Jones polynomial are each calculated from a link diagram by associating a mathematical object to each \textit{state}, and then  combining all the contributions from all of the states.

\begin{definition}
At every crossing in a link diagram $D$, we can perform either an \textbf{$\textit{A}$-smoothing} or a \textbf{$\textit{B}$-smoothing} (see Figure \ref{A-smooth and B-smooth}). After smoothing all crossings, we have an arrangement of circles. 
\begin{figure}[h]
	\centering
	\begin{tikzpicture}[every path/.style={thick}, every
		node/.style={transform shape, knot crossing, inner sep=2.75pt}, scale=0.6]
		\node (tl) at (-1, 1) {};
		\node (tr) at (1, 1) {};
		\node (bl) at (-1, -1) {};
		\node (br) at (1, -1) {};
		\node (c) at (0,0) {};
		
		\draw (bl) -- (tr);
		\draw (br) -- (c);
		\draw (c) -- (tl);
		
		\begin{scope}[xshift=4cm]
			\node (tl) at (-1, 1) {};
			\node (tr) at (1, 1) {};
			\node (bl) at (-1, -1) {};
			\node (br) at (1, -1) {};

			\draw (bl) .. controls (bl.8 north east) and (tl.8 south east) .. (tl);
			\draw (br) .. controls (br.8 north west) and (tr.8 south west) .. (tr);
		\end{scope}

		\begin{scope}[xshift=8cm]
			\node (tl) at (-1, 1) {};
			\node (tr) at (1, 1) {};
			\node (bl) at (-1, -1) {};
			\node (br) at (1, -1) {};

			\draw (bl) .. controls (bl.8 north east) and (br.8 north west) .. (br);
			\draw (tl) .. controls (tl.8 south east) and (tr.8 south west) .. (tr);
		\end{scope}
		
	\end{tikzpicture}
    \vspace{-5pt}
	\caption{ A crossing (left), its $A$-smoothing (middle), and $B$-smoothing (right)}
	\label{A-smooth and B-smooth}
\end{figure}
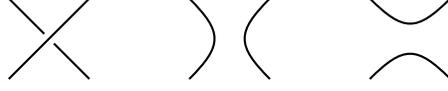 
If the diagram has $c(D)$ crossings, then we have $2^{c(D)}$ total possible arrangements of circles. Each such arrangement is called a \textbf{state}. 
\end{definition}

\begin{definition}
    The \textbf{$\textit{A}$-state} of a diagram $D$ is the state obtained by giving every crossing in the diagram an $A$-smoothing. The \textbf{$\textit{B}$-state} of a diagram $D$ is the state obtained by giving every crossing in the diagram a $B$-smoothing. We use $|s_A(D)|$ to denote the number of circles in the $A$-state and $|s_B(D)|$ to denote the number of circles in the $B$-state.
\end{definition}

\begin{definition}
	A (oriented) link diagram is \textbf{positive} if every crossing in that diagram is positive: \begin{tikzpicture}[every path/.style={thick}, every
	node/.style={transform shape, knot crossing, inner sep=5pt}, scale=0.25]
	\node (tl) at (-1, 1) {};
	\node (tr) at (1, 1) {};
	\node (bl) at (-1, -1) {};
	\node (br) at (1, -1) {};
	\node (c) at (0,0) {};
	
	\draw [-{Stealth[length=4pt,width=4pt]}] (bl) -- (tr);
	\draw (br) -- (c);
	\draw [-{Stealth[length=4pt,width=4pt]}] (c) -- (tl);
\end{tikzpicture}.
A link is \textbf{positive} if it has a positive diagram.  
\end{definition}

In \cite{GMMS2018}, Gonz\'alez-Meneses, Manch\'on and Silvero
they found that for any given link diagram $D$, the \textit{potential} extreme Khovanov gradings come from the contributions of the all-$A$ state and the all-$B$ state.

\begin{definition}
    For any given link diagram $D$ with $p$ positive crossings and $q$ negative crossings, the \textbf{potential extreme Khovanov gradings} are 
    $$j_{\min}(D) :=  c(D)-3q(D) - |s_A(D)|$$ and $$j_{\max}(D) :=  -c(D) + 3p(D) + |s_B(D)|.$$ 
\end{definition}

These definitions are justified by the following: 

\begin{proposition}[{\cite[Corollary~2]{GMMS2018}}]\label{Khovanov bounds GMS}
For any diagram $D$ of any link $L$,
$$j_{\min}(D) \leq \underline{j}(L) \hspace{10pt}\text{ and } \hspace{10pt} \ol{j}(L) \leq j_{\max}(D).$$
\end{proposition}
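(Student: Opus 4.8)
The plan is to prove this exactly as in \cite{GMMS2018}: argue directly from the cube of resolutions underlying the Khovanov chain complex $C^{*,*}(D)$. The key structural input is that the Khovanov differential preserves the quantum grading $j$, so each group $Kh^{i,j}(L)$ is a subquotient of the chain group $C^{i,j}(D)$; consequently, any value of $j$ for which $C^{i,j}(D)=0$ for every $i$ is a value for which $Kh^{i,j}(L)=0$. It therefore suffices to show that \emph{all} chain groups vanish for $j<j_{\min}(D)$ and for $j>j_{\max}(D)$.

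First I would recall how the gradings are assigned. A state $s$ is a choice of $A$- or $B$-smoothing at each crossing; write $r(s)$ for its number of $B$-smoothings and $|s|$ for its number of circles. The summand of the chain complex associated to $s$ is spanned by the ways of labeling each circle of $s$ by one of two symbols, of degree $+1$ and $-1$; with the standard normalization a labeling with $v_+$ positive and $v_-=|s|-v_+$ negative circles sits in quantum grading $j=(v_+-v_-)+r(s)+p(D)-2q(D)$. Hence, among the generators coming from $s$, the quantum grading runs over the interval with endpoints $\pm|s|+r(s)+p(D)-2q(D)$, and across the whole complex the largest and smallest quantum gradings of a generator are
\[
\max_s\big(|s|+r(s)\big)+p(D)-2q(D)
\qquad\text{and}\qquad
-\max_s\big(|s|-r(s)\big)+p(D)-2q(D).
\]

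The combinatorial core is the standard surgery fact that changing a single $A$-smoothing to a $B$-smoothing either merges two circles of the state into one or splits one circle into two, so it changes $|s|$ by exactly $\pm1$ while increasing $r(s)$ by $1$. Thus moving along any edge of the cube toward the all-$B$ state changes $|s|+r(s)$ by $0$ or $2$ and never decreases it; completing an arbitrary state to the all-$B$ state along such a path gives $|s|+r(s)\le|s_B(D)|+c(D)$, with equality at the all-$B$ state. Symmetrically, moving toward the all-$A$ state changes $|s|-r(s)$ by $0$ or $2$ and never decreases it, so $|s|-r(s)\le|s_A(D)|$, with equality at the all-$A$ state. Substituting these two maxima into the display and using $p(D)+q(D)=c(D)$ shows that the extreme quantum gradings occupied by any generator are exactly $-c(D)+3p(D)+|s_B(D)|=j_{\max}(D)$ and $c(D)-3q(D)-|s_A(D)|=j_{\min}(D)$. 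Since $C^{i,j}(D)$ then vanishes for every $i$ as soon as $j$ leaves $[\,j_{\min}(D),j_{\max}(D)\,]$, the same holds for $Kh^{i,j}(L)$, which is precisely $j_{\min}(D)\le\underline{j}(L)$ and $\ol{j}(L)\le j_{\max}(D)$.

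The only step demanding genuine care is the bookkeeping of conventions: with the opposite sign convention for smoothings, or a different orientation normalization, the roles of $s_A$ and $s_B$ (and of $p(D)$ and $q(D)$) are swapped, so one must check that the quantum shift $p(D)-2q(D)$ and the homological shift $r(s)-q(D)$ are the ones matching the stated formulas for $j_{\min}$ and $j_{\max}$, and that the final inequalities come out in the asserted direction rather than its mirror. Everything else is the elementary ``one surgery changes the circle count by one'' fact together with a monotonicity (telescoping) argument along maximal chains in the Boolean cube $\{0,1\}^{c(D)}$ — the same mechanism behind the Kauffman--Murasugi--Thistlethwaite bounds on the span of the Jones polynomial.
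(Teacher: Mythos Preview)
Your argument is correct and is exactly the standard one: bound the quantum gradings of chain generators state by state, use the one-surgery fact to push the extremes to the all-$A$ and all-$B$ states, and then pass from chain groups to homology via the fact that the differential preserves $j$. The arithmetic checks out against the paper's definitions of $j_{\min}(D)$ and $j_{\max}(D)$, and your closing caveat about conventions is well placed.

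There is nothing to compare against in this paper, however: the proposition is simply quoted from \cite{GMMS2018} (as Corollary~2 there) and used as a black box, with no proof given here. Your write-up is essentially the argument of that reference, so in that sense it matches the intended source.
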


Specializing to a positive diagram $D$, we know from \cite{Khovanov2003} that the left-hand inequality above is actually an equality, and so we immediately have: 

\begin{corollary}\label{Khovanov bounds pos GMS}
    For any positive diagram $D$ of a link $L$, we have 

    \begin{equation*} \label{Khovanov min PS}
\underline{j}(L) = j_{\min}(D) = c(D) - |s_A(D)|
\end{equation*} 
and 
\begin{equation*} \label{Khovanov max PS}
\ol{j}(L) \leq j_{\max}(D)  = 2c(D)+|s_B(D)|.
\end{equation*}

\end{corollary}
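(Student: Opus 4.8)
The plan is to obtain both statements by specializing the general bounds of Proposition \ref{Khovanov bounds GMS} to a positive diagram. Since every crossing of $D$ is positive, we have $q(D) = 0$ and $p(D) = c(D)$; substituting these into the definitions of the potential extreme Khovanov gradings gives $j_{\min}(D) = c(D) - |s_A(D)|$ and $j_{\max}(D) = -c(D) + 3c(D) + |s_B(D)| = 2c(D) + |s_B(D)|$, which are exactly the diagrammatic quantities appearing on the right-hand sides of the claimed (in)equalities.

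For the upper extreme grading nothing further is required: Proposition \ref{Khovanov bounds GMS} already gives $\ol{j}(L) \le j_{\max}(D)$, and one simply rewrites $j_{\max}(D)$ as above. For the lower extreme grading, Proposition \ref{Khovanov bounds GMS} supplies only the inequality $j_{\min}(D) \le \underline{j}(L)$, so the real content is the reverse inequality $\underline{j}(L) \le j_{\min}(D)$ --- equivalently, that for a positive diagram the lower potential bound is actually attained by a nonzero homology class. I would deduce this from Khovanov's analysis of positive diagrams in \cite{Khovanov2003}: for a positive diagram the all-$A$ state is the oriented (Seifert) state and lies in homological degree $0$, where the chain complex has no incoming differential, and Khovanov's computation shows that the class of the minimal-quantum-degree generator of this chain group is nonzero in $Kh^{0,\,j_{\min}(D)}(L)$. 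Combining this with Proposition \ref{Khovanov bounds GMS} yields the equality $\underline{j}(L) = j_{\min}(D) = c(D) - |s_A(D)|$.

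There is no genuine obstacle here: the corollary is a specialization of two results already quoted, so the argument is essentially bookkeeping. The only point requiring care is to confirm that the statement imported from \cite{Khovanov2003} is phrased in the same grading normalization as $j_{\min}(D)$ and Proposition \ref{Khovanov bounds GMS}, so that the surviving generator really sits in quantum grading $c(D) - |s_A(D)|$ rather than a normalization-shifted variant.
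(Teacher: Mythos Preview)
Your proposal is correct and follows exactly the paper's approach: specialize the general bounds of Proposition~\ref{Khovanov bounds GMS} to a positive diagram (so $q(D)=0$, $p(D)=c(D)$) and invoke \cite{Khovanov2003} to upgrade the lower inequality to an equality. The paper omits the computational details you spell out, but the argument is the same.
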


We have similar statements for the Jones polynomial. In particular, for a positive diagram, it follows from a more general result proved by Lickorish, 

\begin{lemma}[\cite{Lickorish}] For a positive diagram $D$ of a link $L$, 
$$\min\deg V_L = \frac{c(D) - |s_A(D)| + 1}{2}$$ and
$$\max\deg V_L \leq \frac{2c(D) + |s_B(D)| - 1}{2}.$$ 
\end{lemma}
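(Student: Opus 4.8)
The plan is to deduce this Lemma from the Kauffman bracket / skein-theoretic computation of the Jones polynomial for a positive diagram, exactly paralleling the structure of Corollary~\ref{Khovanov bounds pos GMS}. Recall that for an unoriented diagram $D$ with $c(D)$ crossings, the Kauffman bracket $\langle D\rangle$ is a sum over the $2^{c(D)}$ states, where the all-$A$ state contributes a term of the form $A^{c(D)}(-A^2-A^{-2})^{|s_A(D)|-1}$ and the all-$B$ state contributes $A^{-c(D)}(-A^2-A^{-2})^{|s_B(D)|-1}$, and one then has $V_L(t) = \left.(-A^3)^{-w(D)}\langle D\rangle\right|_{A = t^{-1/4}}$ where $w(D)$ is the writhe. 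For a positive diagram all $c(D)$ crossings are positive, so $w(D) = c(D)$. First I would identify the extreme powers of $A$ appearing in $\langle D\rangle$: the highest power of $A$ coming from the all-$A$ state is $A^{c(D)+2(|s_A(D)|-1)}$ and the lowest from the all-$B$ state is $A^{-c(D)-2(|s_B(D)|-1)}$. The cited result of Lickorish (the dual-state lemma, that no other state can exceed these extremes, and that for an \emph{adequate} diagram the all-$A$ contribution genuinely survives — but here we only need one-sided adequacy, which holds for positive diagrams since a positive diagram is $A$-adequate) gives that the minimum-degree term of $\langle D\rangle$ is exactly $A^{c(D)+2(|s_A(D)|-1)}$ (no cancellation) and the maximum-degree term is at most $A^{-c(D)-2(|s_B(D)|-1)}$ — wait, I should be careful about which end is "min" versus "max" after the substitution $A = t^{-1/4}$, which reverses orientation of the degree axis.

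Carrying that through: after multiplying by $(-A^3)^{-w(D)} = (-A^3)^{-c(D)}$, the lowest power of $A$ in $(-A^3)^{-c(D)}\langle D\rangle$ is $-3c(D) + c(D) + 2(|s_A(D)|-1) = -2c(D) + 2|s_A(D)| - 2$, and this term does not cancel because $D$ is $A$-adequate; the highest power of $A$ is at most $-3c(D) - c(D) - 2(|s_B(D)|-1) $ — let me recompute: the highest power of $A$ in $\langle D \rangle$ is $A^{c(D)+2|s_A(D)|-2}$ and the lowest is $A^{-c(D)-2|s_B(D)|+2}$, so in $(-A^3)^{-c(D)}\langle D\rangle$ the highest power of $A$ is $c(D)+2|s_A(D)|-2 - 3c(D) = -2c(D)+2|s_A(D)|-2$ and the lowest is $-c(D)-2|s_B(D)|+2-3c(D) = -4c(D)-2|s_B(D)|+2$. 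Now substitute $A = t^{-1/4}$: a power $A^k$ becomes $t^{-k/4}$, so the \emph{highest} power of $t$ comes from the \emph{lowest} power of $A$, namely $t^{(4c(D)+2|s_B(D)|-2)/4} = t^{(2c(D)+|s_B(D)|-1)/2}$, and the \emph{lowest} power of $t$ comes from the highest power of $A$, namely $t^{(2c(D)-2|s_A(D)|+2)/4} = t^{(c(D)-|s_A(D)|+1)/2}$. Since the $A$-adequacy of the positive diagram guarantees the extreme $A$-term survives, the lowest-degree statement is an equality; the highest-degree statement is the inequality $\max\deg V_L \le (2c(D)+|s_B(D)|-1)/2$, with possible failure of equality because $D$ need not be $B$-adequate.

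The only genuine content to verify is that a positive diagram is $A$-adequate, i.e., that in the all-$A$ state no crossing has both of its arcs on the same circle; this is where the positivity hypothesis is actually used and is the crux of why the min-degree line is an equality rather than an inequality. I expect this to be the main (though still short) obstacle: one must recall the convention matching "positive crossing" to "$A$-smoothing" used elsewhere in the paper — with the conventions in Figure~\ref{A-smooth and B-smooth} and the positive-crossing picture in the definition of a positive diagram, the $A$-smoothing of a positive crossing is the oriented (Seifert) smoothing, so the all-$A$ state is the oriented resolution, and $A$-adequacy of the all-$A$ state of a reduced positive diagram is classical (it is, e.g., implicit in \cite{Khovanov2003} and is exactly what underlies the equality in Corollary~\ref{Khovanov bounds pos GMS}). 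Everything else is the bookkeeping of substituting $A = t^{-1/4}$ into Lickorish's span estimate and tracking the writhe correction, which I would present as a single displayed computation rather than belaboring.
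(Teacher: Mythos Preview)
The paper does not actually prove this lemma; it is stated with a citation to Lickorish and used as a black box, so there is no ``paper's own proof'' to compare against. Your Kauffman-bracket span argument is correct and is essentially the standard proof one finds in Lickorish: bound the extreme $A$-powers of $\langle D\rangle$ by the all-$A$ and all-$B$ states, apply the writhe correction with $w(D)=c(D)$, substitute $A=t^{-1/4}$, and invoke $A$-adequacy for the equality at the bottom. Your degree bookkeeping checks out.

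One refinement: you hedge by saying ``$A$-adequacy of the all-$A$ state of a \emph{reduced} positive diagram is classical,'' but the lemma as stated does not assume the diagram is reduced, and in fact reducedness is unnecessary. Every positive diagram is $A$-adequate: since the $A$-smoothing of a positive crossing is the oriented (Seifert) smoothing, the all-$A$ state is the collection of Seifert circles, and no crossing can join a Seifert circle to itself. Indeed, the two smoothed arcs at a crossing are locally parallel with the \emph{same} orientation and share a complementary region between them; an oriented Jordan curve cannot have a single complementary region on its left along one arc and on its right along another, so the two arcs lie on distinct Seifert circles. This is the short argument you should slot in where you wrote ``this is where the positivity hypothesis is actually used,'' and it removes the need to appeal to \cite{Khovanov2003} for what is really an elementary planarity fact.
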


In \cite{Buchanan_2022}, \cite{Buchanan2023}, and \cite{buchanan2025conditionjonespolynomialfamily} we developed Theorem \ref{results for Jones}, diagram-independent bounds on the maximum degree of the Jones polynomial of three families of positive links. The key argument proves that if $D$ is a reduced positive diagram of a link $L$ with $p(L)=0, 1, $ or $2$,  

\begin{align}
    \max\deg V_L & \leq \frac{2c(D) + |s_B(D)| - 1}{2} \nonumber \\
    & \leq 4\Big( \frac{c(D) - |s_A(D)| + 1}{2} \Big) + \frac{n-1}{2} + \gamma(L)
    \label{bound with gamma}
\end{align} 
where 
$$\gamma(L) = \begin{cases}
    0 & \text{ if } p_1(L)=0,\\
    2\lead\coeff\nabla_L - 2 &\text{ if } p_1(L)=1,\\
    \lead\coeff\nabla_L &\text{ if } p_1(L)=2.
\end{cases}$$

That is, while Theorem \ref{results for Jones} is stated as a bound on the maximum degree of the Jones polynomial, it is really a bound on the \textit{potential} maximum degree of the Jones polynomial. Hence the key step in proving Theorem \ref{results for Jones} is not bounding $\max \deg V_L$ directly, but finding an upper bound for $|s_B(D)|$, since the potential maximum degree of the Jones polynomial comes from the contribution of the all-$B$ state to the Jones polynomial. 

Combining the inequalities in \ref{bound with gamma} and Corollary \ref{Khovanov max PS} allows us to generalize the statement about bounds on the Jones polynomial in Theorem \ref{results for Jones} to statements about the Khovanov homology. 

\begin{theorem}\label{generalizes to khovanov, cases by 2nd coeff}
  Let $L$ be a positive link and let $p_1(L)$ be the absolute value of the second coefficient of its Jones polynomial. If $p_1(L) = 0,$ $1$, or $2$, then

    $$ \ol{j}(L) \leq \begin{cases}
        4\underline{j}(L) + n + 4 & \text{ if } p_1(L) = 0,\\
        4\underline{j}(L) + n + 4\lead\coeff\nabla_L & \text{ if } p_1(L) = 1,\\
        4\underline{j}(L) + n + 4 + 2\lead\coeff\nabla_L & \text{ if } p_1(L) = 2,
    \end{cases}
    $$
      where the second coefficient is the coefficient of the $t^{\min\deg V_L + 1}$ term, $n$ is the number of link components, $\ol{j}(L):= \max\{j | Kh^{*,j}(L)\neq 0\}$,  $\underline{j}(L):= \min\{j | Kh^{*,j}(L) \neq 0\}$, and $\nabla_L$ is the Conway polynomial of $L$. 
\end{theorem}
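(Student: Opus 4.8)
The plan is to reduce the statement to the purely diagrammatic inequality \ref{bound with gamma}, established in the prior work \cite{Buchanan_2022, Buchanan2023, buchanan2025conditionjonespolynomialfamily}, and then to translate the quantities appearing there into the extreme quantum gradings by means of Corollary \ref{Khovanov bounds pos GMS}. First I would fix a \emph{reduced} positive diagram $D$ of $L$; such a diagram exists, since any nugatory crossing of a positive diagram can be undone by a Reidemeister~I move without creating a negative crossing, so after removing all nugatory crossings one is left with a reduced positive diagram of $L$. For this $D$, Corollary \ref{Khovanov bounds pos GMS} supplies
$$\underline{j}(L) = c(D) - |s_A(D)| \qquad\text{and}\qquad \ol{j}(L) \leq 2c(D) + |s_B(D)|;$$
the first relation is an equality and, being a link invariant, is independent of the chosen positive diagram, while the second needs only positivity of $D$, not reducedness.

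Next I would invoke the second inequality of \ref{bound with gamma}: because $D$ is a reduced positive diagram and $p_1(L) \in \{0,1,2\}$,
$$\frac{2c(D) + |s_B(D)| - 1}{2} \leq 4\left(\frac{c(D) - |s_A(D)| + 1}{2}\right) + \frac{n-1}{2} + \gamma(L).$$
Clearing denominators turns this into $2c(D) + |s_B(D)| \leq 4\bigl(c(D) - |s_A(D)|\bigr) + n + 4 + 2\gamma(L)$, and combining it with the two displays of the previous paragraph yields the single inequality
$$\ol{j}(L) \leq 4\,\underline{j}(L) + n + 4 + 2\gamma(L).$$
Substituting the three possible values of $\gamma(L)$ — namely $0$, $2\lead\coeff\nabla_L - 2$, and $\lead\coeff\nabla_L$ — then produces exactly the three cases of the theorem, since $4 + 2\gamma(L)$ equals $4$, $4\lead\coeff\nabla_L$, and $4 + 2\lead\coeff\nabla_L$ in the three respective cases.

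I do not anticipate a genuine obstacle: the substantive estimate, the upper bound on $|s_B(D)|$ for links in these three families, is precisely what \ref{bound with gamma} records, and what remains is the short algebraic rearrangement above together with the comparison of gradings from Corollary \ref{Khovanov bounds pos GMS}. The only points that deserve to be spelled out are (i) the existence of a reduced positive diagram, which is what allows \ref{bound with gamma} to be applied, and (ii) the fact that the Khovanov-side bound $\ol{j}(L) \leq 2c(D) + |s_B(D)|$ is applied to this same diagram $D$ — a harmless matter, since that bound holds for any positive diagram.
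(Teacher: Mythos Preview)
Your proposal is correct and follows essentially the same route as the paper: fix a reduced positive diagram, apply the diagrammatic bound \ref{bound with gamma} from the prior work, and translate via Corollary \ref{Khovanov bounds pos GMS} into the inequality $\ol{j}(L) \leq 4\underline{j}(L) + n + 4 + 2\gamma(L)$, then read off the three cases. You even add a little more than the paper does, namely the justification that a reduced positive diagram exists.
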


\begin{proof}
    Let $L$ as above. Let $D$ be a reduced positive diagram of $L$. Then 

    \vspace{-3pt}

\begin{align*}
    \ol{j}(L)  \leq j_{\max}(D)  &= 2c(D)+|s_B(D)| &\text{ by \cite{GMMS2018} via \ref{Khovanov max PS}}\\
    &= 2 \Big( \frac{2c(D)+|s_B(D)|-1}{2} \Big) +1 \\
    & \leq 2 \Big( 4 \Big( \frac{c(D)-|s_A(D)|+1}{2} \Big) + \frac{n-1}{2} + \gamma(L) \Big) +1 &\text{ by \cite{Buchanan_2022, Buchanan2023, buchanan2025conditionjonespolynomialfamily} via \ref{bound with gamma}}\\
    & = 4 \Big( c(D)-|s_A(D)| + 1\Big) + n-1 + 2\gamma(L) +1 \\
    & = 4 \underline{j}(L) + n + 4 + 2\gamma(L) &\text{ by \cite{GMMS2018} via \ref{Khovanov min PS}}
\end{align*}

where 
$$\gamma(L) = \begin{cases}
    0 & \text{ if } p_1(L)=0\\
    2\lead\coeff\nabla_L - 2 &\text{ if } p_1(L)=1\\
    \lead\coeff\nabla_L &\text{ if } p_1(L)=2.
\end{cases}$$

The result follows. 

\end{proof}

\section{Further Exploration}\label{section further exploration}

In this section we consider some consequences of these results, and pose some questions for further avenues of research. Since 
$$\text{positive links} \subsetneq \text{strongly quasipositive links} \subsetneq \text{quasipositive links}, \text{\cite{Rudolph98}}$$
it is natural to ask if we can generalize any of our results to quasipositive or strongly quasipositive or links. 

\begin{corollary}
    Theorems \ref{results for Jones} and \ref{generalizes to khovanov, cases by 2nd coeff} cannot be extended to strongly quasipositive links.
\end{corollary}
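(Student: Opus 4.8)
The plan is to prove the corollary by exhibiting a single counterexample, and the first step is a reduction showing that one example defeats \emph{both} theorems at once. For an arbitrary link $L$, a nonzero $q^{j}$–coefficient of $J_L(q)$ forces $Kh^{*,j}(L)\neq 0$, so the graded Euler characteristic relation \ref{jones poly conversion eqn v4} gives $\ol{j}(L)\ge 2\max\deg V_L+1$ and $\underline{j}(L)\le 2\min\deg V_L-1$. Substituting these two inequalities into any line of Theorem \ref{generalizes to khovanov, cases by 2nd coeff} returns \emph{exactly} the corresponding line of Theorem \ref{results for Jones}; hence any link that fails an inequality of Theorem \ref{results for Jones} also fails the corresponding inequality of Theorem \ref{generalizes to khovanov, cases by 2nd coeff}. (The reverse implication can fail, because $\ol{j}(L)$ may exceed $2\max\deg V_L+1$ when there is cancellation in the Euler characteristic; that is precisely why the Khovanov statement is the stronger obstruction, but it is not needed here.) So it suffices to produce one strongly quasipositive link $L$ with $p_1(L)\in\{0,1,2\}$ that violates the relevant line of Theorem \ref{results for Jones}.

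I would then look for such an $L$ among strongly quasipositive knots whose Seifert genus is small but whose crossing number — hence the span of $V_L$ — is large, since strong quasipositivity keeps $\min\deg V_L$ comparable to the genus while the span of $V_L$ is at most the crossing number, so a violation of $\max\deg V_L\le 4\min\deg V_L+(\text{Conway correction})$ forces the knot to be ``long'' relative to its genus. The smallest strongly quasipositive knots do not help: the genus-one ones of low crossing number are twist knots, which sit on the boundary of the inequality (one checks, e.g., that $5_2$ meets the $p_1=1$ bound with equality). The natural source is therefore a genus-one family with unboundedly many crossings, namely the positively–clasped $t$–twisted Whitehead doubles $D_+(K,t)$, which by a theorem of Hedden are strongly quasipositive whenever $t\le 2\tau(K)-1$ — in particular the untwisted double $D_+(K,0)$ when $\tau(K)\ge 1$ — have Seifert genus $1$, and yet have $\max\deg V_L$ growing as $K$ (or the amount of twisting) is made more complicated; iterated torus knots and cables with sufficiently positive parameters are an alternative. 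For a concrete instance I would take such a knot — or, if one is visible in a knot census, a tabulated strongly quasipositive non-positive knot — compute $V_L$ and $\nabla_L$ (via the $2$–cable/skein formula, or from the census), read off $p_1(L)$, $\min\deg V_L$, $\max\deg V_L$, and $\lead\coeff\nabla_L$, and verify that the appropriate inequality of Theorem \ref{results for Jones} is strictly violated. As a sanity check on the shape of the example: any nontrivial knot with second Jones coefficient $0$ and $\min\deg V_L\le 0$ is automatically a counterexample to the $p_1=0$ line, since then $4\min\deg V_L\le\min\deg V_L\le\max\deg V_L$.

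The main obstacle is the hypothesis $p_1(L)\le 2$. Theorem \ref{results for Jones} only constrains the three cyclomatic-number families, so the counterexample must simultaneously overshoot $4\min\deg V_L$ (plus the Conway correction) and have $\rank Kh^{1}(L)\le 2$, that is, have second Jones coefficient of absolute value at most $2$. Arranging for the Jones polynomial to be spread out enough while keeping $p_1$ small is where the real work lies, and the technical heart of the argument is the explicit computation certifying both conditions for the chosen knot. Once such a knot is in hand, the corollary is immediate from the reduction in the first paragraph.
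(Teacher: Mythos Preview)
Your reduction in the first paragraph is correct and is exactly what the paper uses (implicitly): failing the Jones inequality forces failure of the Khovanov inequality, so one counterexample to Theorem \ref{results for Jones} suffices for both. Where you diverge is in the search for the counterexample, and here you overlook the shortest route. The paper simply cites the almost-positive knots constructed in \cite{Buchanan_2022, Buchanan2023, buchanan2025conditionjonespolynomialfamily}, which were already built with $p_1\in\{0,1,2\}$ and already shown to violate Theorem \ref{results for Jones}; it then invokes Feller--Lewark--Lobb \cite{Feller_2022}, who proved that every almost-positive link is strongly quasipositive. That is the entire proof. You had this information available in the introduction of the paper (the sentence following Theorem \ref{results for Jones}), so the counterexamples were sitting in plain sight.

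By contrast, your Whitehead-double plan via Hedden's theorem is plausible in outline but is not a proof as written: you never produce a specific $D_+(K,t)$ and never verify the crucial hypothesis $p_1(L)\le 2$ for it. You correctly identify this as ``where the real work lies,'' but then leave that work undone. Controlling the second Jones coefficient of a Whitehead double is not automatic, and your sanity-check observation about $\min\deg V_L\le 0$ does not apply to strongly quasipositive knots, whose Jones polynomials have nonnegative minimum degree. So the proposal has a genuine gap at the exact point you flagged, whereas the paper closes it in one line by recycling known almost-positive examples and the Feller--Lewark--Lobb theorem.
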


\begin{proof}

In \cite{Buchanan_2022}, \cite{Buchanan2023}, and \cite{buchanan2025conditionjonespolynomialfamily} we found examples of almost-positive diagrams with second Jones coefficient equal to $0$, $\pm 1$, or $\pm 2$ whose Jones polynomials do not satisfy the conclusion of Theorem \ref{results for Jones}. Hence their Khovanov homologies cannot satisfy the conclusion of Theorem \ref{generalizes to khovanov, cases by 2nd coeff} either. Since Feller, Lewark, and Lobb proved in \cite{Feller_2022} that all links with almost-positive diagrams are strongly quasipositive, Theorems \ref{results for Jones} and \ref{generalizes to khovanov, cases by 2nd coeff} cannot be extended to strongly quasipositive links.
    
\end{proof}

But can we refine this any further?

\begin{question}
    In \cite{kegel2023khovanovhomologypositivelinks}, Kegel, Manikandan, Mousseau, and Silvero found that for any positive link $L$ we have $Kh^1(L) = \Z^{p_1(L)}$. Does this hold for strongly quasipositive links?
\end{question}

If it does, then Theorem \ref{generalizes to khovanov, cases by hom 1} could also hold for strongly quasipositive links. 

\begin{question}
    Does Theorem \ref{generalizes to khovanov, cases by hom 1} hold for strongly quasipositive links?
\end{question}

\begin{corollary}
    For knots with second Jones coefficient equal to $0$, the Khovanov test of Theorem \ref{generalizes to khovanov, cases by 2nd coeff} is strictly stronger than the Jones test of Theorem \ref{results for Jones}.
\end{corollary}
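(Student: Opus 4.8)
The plan is to prove two things: that the Khovanov test of Theorem~\ref{generalizes to khovanov, cases by 2nd coeff} obstructs positivity of every knot obstructed by the Jones test of Theorem~\ref{results for Jones}, and that some knot is obstructed by the former but not the latter. Throughout we have $n=1$, so for a knot $K$ with $p_1(K)=0$ the two tests read $\max\deg V_K\le 4\min\deg V_K$ and $\ol{j}(K)\le 4\underline{j}(K)+5$; since $V_K\in\Z[t,t^{-1}]$ these are integer inequalities, and ``failing'' one means it is strictly violated.

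First I would record a consequence of equation~\ref{jones poly conversion eqn v4} valid for \emph{any} knot $K$ (not just positive ones, which is the relevant generality for an obstruction):
$$\ol{j}(K)\ \ge\ 2\max\deg V_K+1\qquad\text{and}\qquad \underline{j}(K)\ \le\ 2\min\deg V_K-1 .$$
Writing $V_K(t)=\sum_k a_k t^k$ with $a_{\max\deg}\neq 0\neq a_{\min\deg}$, equation~\ref{jones poly conversion eqn v4} gives $J_K(q)=(q+q^{-1})\sum_k a_k q^{2k}$, whose top term is $a_{\max\deg}\,q^{2\max\deg V_K+1}$ and whose bottom term is $a_{\min\deg}\,q^{2\min\deg V_K-1}$, neither of which can cancel; as $J_K(q)=\sum_{i,j}(-1)^i\rank Kh^{i,j}(K)q^j$, this forces some nonzero Khovanov group in quantum grading $2\max\deg V_K+1$ and some in $2\min\deg V_K-1$. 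Now if $K$ has $p_1(K)=0$ and fails the Jones test, i.e.\ $\max\deg V_K\ge 4\min\deg V_K+1$, then
$$\ol{j}(K)-4\underline{j}(K)-5\ \ge\ (2\max\deg V_K+1)-4(2\min\deg V_K-1)-5\ =\ 2\bigl(\max\deg V_K-4\min\deg V_K\bigr)\ \ge\ 2 ,$$
so $K$ fails the Khovanov test too. (For a positive knot Corollary~\ref{Khovanov bounds pos GMS} together with Lickorish's lemma upgrades the lower estimate to the equality $\underline{j}(K)=2\min\deg V_K-1$ --- this is what makes the arithmetic in the proof of Theorem~\ref{generalizes to khovanov, cases by 2nd coeff} close up exactly --- but only the two inequalities above are needed here.)

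Second, I would exhibit an explicit separating knot; the natural candidate is $11_n57$, whose Khovanov homology appears in Figure~\ref{fig:KH 11n57}. From its invariants I would check that $p_1(11_n57)=0$, that $\max\deg V_{11_n57}\le 4\min\deg V_{11_n57}$ so the Jones test of Theorem~\ref{results for Jones} is passed, and that $\ol{j}(11_n57)>4\underline{j}(11_n57)+5$ so the Khovanov test of Theorem~\ref{generalizes to khovanov, cases by 2nd coeff} is violated. Such a knot exists precisely because of the phenomenon pointed out after Figure~\ref{fig:KH 11n57}: the top nonvanishing quantum grading of the Khovanov homology sits strictly above $2\max\deg V_{11_n57}+1$ since the extra homology there cancels in the Euler characteristic, so the Khovanov test detects a larger extreme grading than the Jones polynomial alone can.

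The degree/grading comparison and the two displayed inequality chains are routine; the only real work is verifying the separating example, which reduces to reading $\max\deg V$, $\min\deg V$, $p_1$, $\ol{j}$ and $\underline{j}$ off the data in Figure~\ref{fig:KH 11n57} and checking two numerical inequalities. I expect the main subtlety to be presentational: stating carefully that the extreme terms of $(q+q^{-1})V_K$ do not cancel, since that is precisely what yields $\ol{j}(K)\ge 2\max\deg V_K+1$ for an \emph{arbitrary} knot, which is the case that matters when the statement is used as a positivity obstruction.
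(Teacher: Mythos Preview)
Your first half is clean and in fact goes beyond what the paper does: the paper's own proof of this corollary only exhibits a separating example and does not argue the ``at least as strong'' direction, whereas your inequality chain
\[
\ol{j}(K)-4\underline{j}(K)-5\ \ge\ 2\bigl(\max\deg V_K-4\min\deg V_K\bigr)
\]
derived from $\ol{j}(K)\ge 2\max\deg V_K+1$ and $\underline{j}(K)\le 2\min\deg V_K-1$ is a correct, general argument that any knot failing the Jones test also fails the Khovanov test.

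The gap is in your separating example. You propose $11_n57$ because Figure~\ref{fig:KH 11n57} illustrates cancellation at the top quantum grading, but that figure was placed to illustrate a general phenomenon, not this corollary; nothing in the paper asserts that $11_n57$ has $p_1=0$ or that it passes the Jones inequality. Your plan hinges on three unchecked facts about $11_n57$ (that $p_1=0$, that $\max\deg V\le 4\min\deg V$, and that $\ol j>4\underline j+5$), and if any one fails the example collapses. The paper instead uses $12_n749$: its Jones polynomial $t^3+t^5-t^6+t^7-t^8+t^9-t^{10}$ visibly has vanishing second coefficient and satisfies $10\le 4\cdot 3$, so the Jones test is passed; from its Khovanov polynomial one reads $\underline{j}=3$ and $\ol{j}=21$, and $21>4\cdot 3+5=17$, so the Khovanov test is failed. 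The author also notes that $12_n749$ shares its HOMFLY-PT polynomial with the positive knot $7_1$, so no HOMFLY-based test can see its non-positivity --- this strengthens the point. Replace your candidate with $12_n749$ (or actually carry out and record the three checks for $11_n57$ before relying on it).
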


\begin{proof} All of the following polynomial data comes from the KnotInfo website \cite{knotinfo}. We consider the knot $12_n 749$. Its Jones polynomial is $t^3 + t^5 - t^6 + t^7 - t^8 + t^9 - t^{10}$, so Theorem \ref{results for Jones} cannot detect that this knot is not positive. In fact, since $12_n 749$ has the same HOMFLY-PT polynomial as positive knot $7_1$, any condition on the HOMFLY-PT, Conway, or Jones polynomials of a positive knot will fail to detect that $12_n749$ is not positive.

But next we look at the Khovanov homology of $12_n749$. Its Khovanov unreduced integral polynomial (polynomial in which each term $a t^i q^j$ corresponds to a copy of $\Z^a$ in row $j$ column $i$ of the Khovanov homology chart, and each $a t^i q^j T^2$ corresponds to a copy of $\Z_2^a$) is 
$$(1 + t)q^{3} + q^{5} + (2 t^{2}  + t^{3})q^{7} +  t^{4}q^9 + (t^{3} + 2 t^{4} + t^{5})q^{11} + (t^{5} + t^{6}) q^{13} + (t^{5} + t^{6}) q^{15}  + (t^{7}  + t^{8}) q^{17} + t^{9} q^{21}$$
$$+ t^{2} q^{5} T^{2} + t^{3} q^{9} T^{2} + t^{4} q^{9} T^{2} + t^{6} q^{13} T^{2} + t^{7} q^{15} T^{2} + t^{9} q^{19} T^{2},$$ 
meaning that $\underline{j}(12_n749) = 3$ and $\ol{j}(12_n749) = 21$.  Since $21 \nleq 4(3) + 1 + 4 = 17$, Theorem \ref{generalizes to khovanov, cases by 2nd coeff} tells us that $12_n749$ is not positive. 
    
\end{proof}

We note that $12_n749$ is, however, a quasipositive knot. 

\begin{question}\label{inf pass fail}
    Can we construct an infinite family of non-positive quasipositive links with vanishing second Jones coefficient whose non-positivity is detected by the Khovanov homology test of Theorem \ref{generalizes to khovanov, cases by 2nd coeff} but is not detected by the Jones polynomial test of Theorem \ref{results for Jones}, as in the example of knot $12_n749$?
\end{question}

\begin{question} \label{exist pass fail for pm 1 or 2}    
    Do there exist non-positive quasipositive links with second Jones coefficient equal to $\pm 1$ or $\pm 2$ whose non-positivity is detected by the Khovanov homology test of Theorem \ref{generalizes to khovanov, cases by 2nd coeff} but is not detected by the Jones polynomial test of Theorem \ref{results for Jones}, similar to the example of knot $12_n749$? 
\end{question}

We expect Questions  \ref{inf pass fail} and \ref{exist pass fail for pm 1 or 2} will be answered in the affirmative. 

Finally, we note that $12_n749$ is an almost-strongly quasipositive knot but it is not strongly quasipositive. 

\begin{question}\label{pass fail for strong}
    Do there exist non-positive strongly quasipositive links which pass the test of the Jones polynomial but fail the test of Khovanov homology, similar to the example of knot $12_n749$? Or are both tests equally successful in detecting non-positivity among strongly quasipositive links?
\end{question}
\printbibliography

@article{Buchanan_2022,
author = {Lizzie Buchanan},
title = {A new condition on the Jones polynomial of a fibered positive link},
journal = {Journal of Knot Theory and Its Ramifications},
volume = {32},
number = {14},
pages = {2350079},
year = {2023},
doi = {10.1142/S0218216523500797},
URL = {https://doi.org/10.1142/S0218216523500797},
eprint = {https://doi.org/10.1142/S0218216523500797},
}

@article{Futer_2013,
	doi = {10.2140/agt.2013.13.2799},
  
	url = {https://doi.org/10.2140%2Fagt.2013.13.2799},
	year = 2013,
	month = {7},
	publisher = {Mathematical Sciences Publishers},
	volume = {13},
	number = {5},
	pages = {2799--2807},
	author = {David Futer},
	title = {Fiber detection for state surfaces},
	journal = {Algebraic {\&} Geometric Topology}
}

@article{Kauffman,
title = {State models and the {J}ones polynomial},
journal = {Topology},
volume = {26},
number = {3},
pages = {395-407},
year = {1987},
issn = {0040-9383},
doi = {https://doi.org/10.1016/0040-9383(87)90009-7},
author = {Louis {H. Kauffman}}
}

@misc{knotinfo,
Author = {Charles Livingston and Allison H. Moore},
howpublished = {URL: \url{knotinfo.math.indiana.edu}},
Month = {9},
Title = {KnotInfo: Table of Knot Invariants},
Year = {2025},
}

@article{St05,
  title={On some restrictions to the values of the {J}ones polynomial},
  author={Alexander Stoimenow},
  journal={Indiana University Mathematics Journal},
  year={2005},
  volume={54},
  pages={557-574}
}

@article{Feller_2022,
	doi = {10.1007/s00208-021-02328-x},
	url = {https://doi.org/10.1007%2Fs00208-021-02328-x}, 
	year = 2022,
	month = {1},
	publisher = {Springer Science and Business Media {LLC}
}, 
	volume = {385},
  	number = {1-2}, 
	pages = {481--510}, 
	author = {Peter Feller and Lukas Lewark and Andrew Lobb}, 
	title = {Almost positive links are strongly quasipositive}, 
	journal = {Mathematische Annalen}
}

@book{futer2012guts,
  title={Guts of Surfaces and the Colored {J}ones Polynomial},
  author={David Futer and Efstratia Kalfagianni and Jessica S. Purcell},
  isbn={9783642333019},
  lccn={2012953002},
  series={Lecture Notes in Mathematics},
  year={2012},
  publisher={Springer Berlin Heidelberg}
}

@misc{Buchanan2023,
      title={A pair of Jones polynomial positivity obstructions}, 
      author={Lizzie Buchanan},
      year={2023},
      eprint={2303.13481},
      archivePrefix={arXiv},
      primaryClass={math.GT},
      url={https://arxiv.org/abs/2303.13481}, 
}

@article{PS2020,
	doi = {10.1007/s10711-019-00462-0},
	url = {https://doi.org/10.1007/s10711-019-00462-0},
	year = 2020,
	month = {02},
	publisher = {},
	volume = {204},
	number = {},
	pages = {387--401},
	author = {Przytycki, J{\'o}zef H. and Silvero, Marithania},
	title = {Geometric realization of the almost-extreme Khovanov homology of semiadequate links},
	journal = {Geometriae Dedicata}
}

@misc{kegel2023khovanovhomologypositivelinks,
      title={Khovanov homology of positive links and of L-space knots}, 
      author={Marc Kegel and Naageswaran Manikandan and Leo Mousseau and Marithania Silvero},
      year={2023},
      eprint={2304.13613},
      archivePrefix={arXiv},
      primaryClass={math.GT},
      url={https://arxiv.org/abs/2304.13613}, 
}

@article{RadSDS22,
author = {Radmila Sazdanovi\'{c} and  Daniel Scofield},
title = {Extremal Khovanov homology and the girth of a knot},
journal = {Journal of Knot Theory and Its Ramifications},
volume = {31},
number = {13},
pages = {2250083},
year = {2022},
doi = {10.1142/S0218216522500833},
URL = {https://doi.org/10.1142/S0218216522500833
},
eprint = { https://doi.org/10.1142/S0218216522500833
}
}

@article{GMMS2018,
    author = {J. Gonz{\'{a}}lez-Meneses and P. M. G. Manch{\'{o}}n and M. Silvero},
    title = {A geometric description of the extreme Khovanov cohomology},
    journal = {Proceedings of the Royal Society of Edinburgh: Section A Mathematics},
    year = {2018},
    volume = {148},
    DOI = {10.1017/S0308210517000300},
    number = {3},
    pages = {541-557}
}

@misc{KnotAtlas7-4,
howpublished = {URL: \url{https://katlas.org/wiki/7_4}},
Title = {The Knot Atlas: $K7_4$},
lastvisited = {2025-8-19}
}

@misc{KnotAtlas11n57,
howpublished = {URL: \url{https://katlas.org/wiki/K11n57}},
Title = {The Knot Atlas: $K11n57$},
lastvisited = {2025-7-16}
}

@misc{buchanan2025conditionjonespolynomialfamily,
	title={A condition on the Jones polynomial for a family of positive links}, 
	author={Lizzie Buchanan},
	year={2025},
	eprint={2509.15537},
	archivePrefix={arXiv},
	primaryClass={math.GT},
	url={https://arxiv.org/abs/2509.15537}, 
}

@article{Khovanov2003,
author = {Mikhail Khovanov},
journal = {Experimental Mathematics},
keywords = {knot homology; Jones polynomials; Alexander polynomial; hyperbolic volume},
language = {eng},
number = {3},
pages = {365-374},
publisher = {Taylor {\&} Francis, Philadelphia},
title = {Patterns in knot cohomology. I.},
url = {http://eudml.org/doc/51727},
volume = {12},
year = {2003},
}

@article{Jones:1985dw,
    author = "V. F. R. Jones",
    title = "{A polynomial invariant for knots via von Neumann algebras}",
    doi = "10.1090/S0273-0979-1985-15304-2",
    journal = "Bull. Am. Math. Soc.",
    volume = "12",
    pages = "103--111",
    year = "1985"
}

@book{Lickorish,
	place={New York, NY},
	title={An Introduction to Knot Theory},
	author={W. B. Raymond Lickorish},
	year={2012},
	publisher={Springer},
	isbn={978-1-4612-0691-0},
	doi={https://doi.org/10.1007/978-1-4612-0691-0}
}

@article{KhovanovCat,
author = {Mikhail Khovanov},
title = {{A categorification of the Jones polynomial}},
volume = {101},
journal = {Duke Mathematical Journal},
number = {3},
publisher = {Duke University Press},
pages = {359 -- 426},
year = {2000},
doi = {10.1215/S0012-7094-00-10131-7},
URL = {https://doi.org/10.1215/S0012-7094-00-10131-7}
}

@article{KronheimerMrowka11,
author = {P. B. Kronheimer and T. S. Mrowka},
title = {{Khovanov homology is an unknot-detector}},
volume = {113},
journal = {Publications math\'{e}matiques de l'IH\'{E}S},
publisher = {Springer},
pages = {97 -- 208},
year = {2011},
doi = {10.1007/s10240-010-0030-y},
URL = {https://doi.org/10.1007/s10240-010-0030-y}
}

@article{OlegViro2004,
abstract = {Mikhail Khovanov defined, for a diagram of an oriented classical link, a collection of groups labelled by pairs of integers. These groups were constructed as the homology groups of certain chain complexes. The Euler characteristics of these complexes are the coefficients of the Jones polynomial of the link. The original construction is overloaded with algebraic details. Most of the specialists use adaptations of it stripped off the details. The goal of this paper is to overview these adaptations and show how to switch between them. We also discuss a version of Khovanov homology for framed links and suggest a new grading for it.},
author = {Oleg Viro},
journal = {Fundamenta Mathematicae},
keywords = {Khovanov homology; links; Reidemeister moves},
language = {eng},
number = {1},
pages = {317-342},
title = {Khovanov homology, its definitions and ramifications},
url = {http://eudml.org/doc/282696},
volume = {184},
year = {2004},
}

@article{BarNatan02,
author = {Dror Bar-Natan},
title = {{On Khovanov's categorification of the Jones polynomial}},
volume = {2},
journal = {Algebraic {\&} Geometric Topology},
number = {1},
publisher = {MSP},
pages = {337 -- 370},
keywords = {categorification, Jones polynomial, Kauffman bracket, Khovanov, knot invariants},
year = {2002},
doi = {10.2140/agt.2002.2.337},
URL = {https://doi.org/10.2140/agt.2002.2.337}
}

@article{Rudolph98,
author = {Lee Rudolph},
year = {1998},
month = {05},
pages = {},
title = {Positive links are strongly quasipositive},
volume = {2},
journal = {Proceedings of the Kirbyfest},
doi = {10.2140/gtm.1999.2.555}
}

\end{document}